\newtheorem{theorem}{Theorem}
\newcommand{\bth}[2][nothing]{\ifthenelse{\equal{#1}{nothing}}
 {\begin{theorem}} {\begin{theorem}[#1]}\label{th:#2}}
\newtheorem{lemma}[theorem]{Lemma}
\newcommand{\blm}[2][nothing]{\ifthenelse{\equal{#1}{nothing}}
 {\begin{lemma}} {\begin{lemma}[#1]}\label{lm:#2}}
\newtheorem{cor}[theorem]{Corollary}
\newcommand{\bcor}[2][nothing]{\ifthenelse{\equal{#1}{nothing}}
 {\begin{cor}} {\begin{cor}[#1]}\label{co:#2}}
\newcommand{\ceil}[1]{\lceil #1\rceil}
  \newenvironment{proof}{\vspace{1ex}\noindent{\bf Proof:}}{\hspace*{\fill}
  $\blacksquare$\vspace{1ex}}
  \newenvironment{proofof}[1]{\vspace{1ex}\noindent{\bf Proof of #1:}}{\hspace*{\fill}
  $\blacksquare$\vspace{1ex}}
\begin{document}

\def\Tbest{T_{\rm bestmix}}
\def\Tmix{T_{\rm mix}}
\def\Tres{T_{\rm reset}}
\def\Tfor{T_{\rm forget}}
\def\Thit{T_{\rm hit}}
\def\kmp{\k(\mu, \pi)}

\def\Ret{\mbox{Ret}}

\def\rp{{\hat{p}}}
\def\rx{{\hat{x}}}
\def\rP{{\hat{P}}}
\def\rX{{\hat{X}}}
\def\rH{{\hat{H}}}

\def\rTbest{\hat{T}_{\rm bestmix}}
\def\rTmix{\hat{T}_{\rm mix}}
\def\rTres{\hat{T}_{\rm reset}}
\def\rTfor{\hat{T}_{\rm forget}}
\def\rThit{\hat{T}_{\rm hit}}

\def\rmu{\hat{\mu}}

\def\bH{{\overline{H}}}
\def\bp{{\overline{\pi}}}

\def\L{{\Delta}}

\def\vol{\mathrm{vol}}
\def\bone{\mathbf{1}}
\def\bzero{\mathbf{0}}
\def\bh{\mathbf{h}}
\def\bj{\mathbf{j}}
\def\br{\mathbf{r}}
\def\bv{\mathbf{v}}
\def\bi{\mathbf{i}}
\def\bx{\mathbf{x}}

\def\cG{\mathcal{G}}
\def\cL{\mathcal{L}}
\def\cX{\mathcal{X}}

\def \bbL{\mathbb{L}}
\def \bbG{\mathbb{G}}
\def \rbbG{\hat{\bbG}}

\title{A Hitting Time Formula for the Discrete Green's Function  } 
\author{Andrew Beveridge\footnote{Department of Mathematics, Statistics and Computer Science, Macalester College, St Paul, MN 55105. \texttt{abeverid@macalester.edu}}}

\date{}

\maketitle

\begin{abstract}
The discrete Green's function (without boundary) $\bbG$ is a pseudo-inverse of the combinatorial Laplace operator of a graph $G=(V,E)$. We reveal the intimate connection between Green's function and the theory of exact stopping rules for random walks on graphs. We give an elementary formula for Green's function in terms of state-to-state hitting times of the underlying graph. Namely,
$$
\bbG(i,j) = \pi_j \left( \sum_{k \in V} \pi_k H(k,j) - H(i,j) \right)
$$
where $\pi_i$ is the stationary distribution at vertex $i$ and $H(i,j)$ is the expected hitting time for a random walk starting from vertex $i$ to first reach vertex $j$.
This formula also holds for the digraph Laplace operator.

The most important characteristics of a stopping rule are its exit frequencies, which are the expected number of  exits of a given vertex before the rule halts the walk. We show that Green's function is, in fact, a matrix of exit frequencies  plus a rank one matrix.  
In the undirected case, we derive spectral formulas for Green's function and for some mixing measures arising from stopping rules.  Finally, we further explore the exit frequency matrix point-of-view, and discuss a natural generalization of Green's function for any distribution $\tau$ defined on the vertex set of the graph.
\medskip

{\bf AMS MSC} 05C81

\end{abstract}

\section{Introduction}

Let $G=(V,E)$ be a simple undirected graph on vertices $V= \{ 1, 2, \ldots , n \}$. (For notational convenience, we identify a vertex with its label.) We define the volume of $G$ to be $\vol(G) = \sum_{k \in V} \deg(k)$. Let $A$ be the adjacency matrix of $G$  and let $D=\mathrm{diag} (\deg(1), \ldots , \deg(n))$ be the diagonal matrix of degrees. The \emph{discrete Laplace operator} (cf. \cite{chung}) is the $n \times n$ matrix 
\[
\L = I - D^{-1} A.
\]
We can view $\L$ as a linear transformation $\L: V^* \rightarrow V^*$ where $V^*$ denotes the vector space of all real functions on $V$. The Laplace operator is a variant of the graph Laplacian $L = D \L = D - A$ and the normalized graph Laplacian $\cL =D^{1/2}\L D^{-1/2} =   I - D^{-1/2}AD^{-1/2}.$

The Laplace operator is directly related to random walks on $G$. The matrix $P=D^{-1}A$ is the \emph{transition matrix} for a simple random walk on $G$ since  $P_{ij} = 1/\deg(i)$ when $ij \in E$ and 0 otherwise. The matrices $\L=I-P$ and $P$ share the same eigenvectors, where the eigenvalue $\lambda_k$ of $\L$ corresponds to the eigenvalue $\lambda_k' = 1-\lambda_k$ of $P$. 
The Laplace operator $\L$  has rank $n-1$: the vector $\pi = ( \pi_1, \ldots, \pi_n)$ where $\pi_i = \deg(i)/ \vol(G)$ is a left eigenvector for eigenvalue $\lambda_0=0$, and the all-ones vector is a right eigenvector. With respect to $P$, these are eigenvectors for eigenvalue $\lambda_0' = 1$, with the following interpretations.
Having $\bone$ as a right eigenvector for $\lambda_0'=1$ captures the fact  that the  transition probabilities from state $i$ sum to one. Having $\pi$ as a left eigenvector means that $\pi$ is the stationary distribution for random walks on $G$.

The discrete Green's function $\bbG$ was introduced by Chung and Yau \cite{chung+yau}. This $n \times n$ matrix is the pseudo-inverse of $\L$ given by
\begin{equation}
\label{eqn:green-cons}
\begin{array}{rcl}
\bbG \L &=& I - \bone \pi^{\top}, \\
\bbG \bone &=& \bzero.
\end{array}
\end{equation}
 The second constraint guarantees the uniqueness of $\bbG$. 
Green's function has been computed for some special families of graphs, including the path, the hypercube \cite{chung+yau}, products of cycles \cite{ellis}, the complete graph and trees \cite{xu+yau}. Many of these values have been computed via spectral formulas. Recently, Xu and Yau \cite{xu+yau} developed a  formula using two counting invariants of graphs that involve sums and products over spanning linear subgraphs of $G$. 

We give a very simple formula for Green's function in terms of  hitting times for random walks on $G$. This formula is more tractable and versatile than the results described above. Our formula also holds for weighted, directed graphs (or digraphs, for short), so we introduce some digraph definitions before stating our main theorem.
  
 Let $G=(V,E,W)$ be a weighted digraph on vertices $V=\{ 1, 2, \cdots , n \}$, directed edge set $E$ and non-negative edge weights $W = \{ w_{ij} \mid 1 \leq i,j \leq n \}$, where  $w_{ij}=0$ whenever $ij \notin E$. 
The corresponding  transition probability matrix $P$ has entries
\[
P_{ij} = \frac{w_{ij}}{\sum_{k \in V} w_{ik}}.
\]
This definition is a natural generalization of the undirected case: when $G$
is an undirected graph whose edges have unit weight, this formula becomes $P = D^{-1}A$.
For a strongly connected, aperiodic digraph $G$, the Perron-Frobenius theorem guarantees that eigenvalue $\lambda=1$ has  a unique unit left  eigenvector  $\pi$ with $\pi(i) > 0$ for all $i \in V$. This vector $\pi$ is the stationary distribution for the random walk corresponding transition matrix $P$.
 Li and Zhang \cite{li+zhang} introduce the normalized digraph Laplacian $\cL = \Pi^{1/2} (I - P) \Pi^{-1/2}$,
where  $\Pi$ is the diagonal matrix with $\Pi_{ii} = \pi(i)$.  The  corresponding digraph Laplace operator is $\L = I - P.$ 
As in the undirected case, Green's function $\bbG$ is the matrix satisfying
$\bbG \L = I - \bone \pi^{\top}$ and $\bbG \bone = \bzero$. 

From here forward, we assume that $G=(V,E)$ is a strongly connected digraph, where we  break periodicity by considering a lazy random walk, if necessary.
 Given  $i,j \in V$, the \emph{hitting time} $H(i,j)$ is the expected number of steps before a random walk started at $i$ first reaches $j$. We choose to define $H(i,i)=0$ and let the \emph{return time} $\mbox{Ret}(i)$ denote the expected number of steps before a random walk started at $i$ first returns to $i$. We also define
\[
H(\pi,j) = \sum_{i \in V} \pi_i H(i,j)
\]
to be the expected number of steps it takes for a walk starting from a random initial vertex to reach $j$. We are now ready to state our main result.
\begin{theorem}
\label{thm:green}
Let $G$ be a strongly connected digraph on vertices $V=\{ 1, 2, \ldots , n \}$.
Green's function $\bbG$ for $G$ is the $n \times n$ matrix given by
\begin{equation}
\label{eq:green-formula}
\bbG(i,j) = \pi_j ( H(\pi,j) - H(i,j)).
\end{equation}
\end{theorem}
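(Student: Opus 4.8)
The plan is to check that the matrix $M$ with entries $M(i,j)=\pi_j\bigl(H(\pi,j)-H(i,j)\bigr)$, i.e.\ the right-hand side of \eqref{eq:green-formula}, satisfies the two equations in \eqref{eqn:green-cons} that pin down $\bbG$ uniquely. Write $H$ for the $n\times n$ matrix of hitting times (with $H(i,i)=0$), let $J=\bone\bone^{\top}$ be the all-ones matrix, and let $\Pi=\mathrm{diag}(\pi_1,\dots,\pi_n)$. Since $H(\pi,j)=(\pi^{\top}H)_j$, the matrix $M$ has the compact form $M=(\bone\pi^{\top}-I)H\Pi$, and the whole argument becomes a short computation with this expression together with $\L\bone=\bzero$, $\pi^{\top}\L=\bzero$, and $\pi^{\top}\bone=1$.

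The one substantive input is the standard first-step identity for hitting times. Conditioning on the first step gives $H(i,j)=1+\sum_{k}P_{ik}H(k,j)$ for $i\neq j$, while for $i=j$ the same conditioning together with the elementary fact $\Ret(j)=1/\pi_j$ gives $\sum_{k}P_{jk}H(k,j)=\Ret(j)-1=1/\pi_j-1$. In matrix form these two cases are exactly
\[
\L H = J - \Pi^{-1}.
\]
Everything then drops out. Right-multiplying by $\pi$ gives $\L H\pi=J\pi-\Pi^{-1}\pi=\bone-\bone=\bzero$, so $H\pi\in\ker\L=\mathrm{span}\{\bone\}$ (this is Kemeny's constant); hence $M\bone=(\bone\pi^{\top}-I)H\pi=\bzero$, which is the second equation of \eqref{eqn:green-cons}. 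Using $\L\bone=\bzero$ and $J\Pi=\bone\pi^{\top}$,
\[
\L M = -\,\L H\,\Pi = -(J-\Pi^{-1})\Pi = I - J\Pi = I - \bone\pi^{\top},
\]
and likewise $\pi^{\top}M=(\pi^{\top}\bone\,\pi^{\top}-\pi^{\top})H\Pi=\bzero$.

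It remains to reconcile the relation $\L M=I-\bone\pi^{\top}$ with the \emph{right}-sided defining equation $\bbG\L=I-\bone\pi^{\top}$. I will note that $\bbG$ itself obeys the left-sided version: left-multiplying $\bbG\L=I-\bone\pi^{\top}$ by $\pi^{\top}$ and then right-multiplying by $\bone$, using $\bbG\bone=\bzero$, shows $\pi^{\top}\bbG=\bzero$; and a one-line argument (both $\L\bbG$ and $I-\bone\pi^{\top}$ are matrices $X$ with $X\L=\L$ and $X\bone=\bzero$, and such an $X$ is unique) gives $\L\bbG=I-\bone\pi^{\top}$. Finally, the pair of conditions $\L X=I-\bone\pi^{\top}$, $\pi^{\top}X=\bzero$ has at most one solution: the difference of two solutions lies columnwise in $\ker\L=\mathrm{span}\{\bone\}$, hence equals $\bone u^{\top}$, and $\pi^{\top}\bone\,u^{\top}=\bzero$ with $\pi^{\top}\bone=1$ forces $u=\bzero$. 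Since $M$ and $\bbG$ both satisfy this pair, $M=\bbG$, which is \eqref{eq:green-formula}.

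The main obstacle is precisely this side-mismatch: first-step analysis naturally produces $\L H$, hence $\L M$, whereas $\bbG$ is defined through $\bbG\L$; the resolution is the observation that $\bbG$ also satisfies the left-sided identity, after which the rigidity argument closes the gap, and everything else is bookkeeping with the rank-one terms $\bone\pi^{\top}$ and $J$. An alternative route that avoids this point is to first recognize $M=Z-\bone\pi^{\top}$, where $Z=(\L+\bone\pi^{\top})^{-1}$ is the fundamental matrix, using the classical identity $H(i,j)=(Z_{jj}-Z_{ij})/\pi_j$ together with $\pi^{\top}Z=\pi^{\top}$; then $M\bone=\bzero$ and $M\L=I-\bone\pi^{\top}$ are both immediate from $Z\bone=\bone$. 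I would present the first route, since it stays self-contained in the language of hitting times.
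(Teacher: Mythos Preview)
Your proof is correct and takes a genuinely different route from both arguments in the paper. The paper's digraph proof (Section~\ref{sec:efm-proof}) imports the Lov\'asz--Winkler conservation equation \eqref{eqn:conservation} to obtain the \emph{right}-sided relation $X_{\pi}\L=I-\bone\pi^{\top}$ directly, and then identifies $\bbG$ as $X_{\pi}$ shifted by a rank-one term $\bh\pi^{\top}$; the undirected proof (Section~\ref{sec:psychic-proof}) verifies $B\L=I-\bone\pi^{\top}$ entrywise using the cycle-reversing identities, which are unavailable for digraphs. You instead derive the \emph{left}-sided relation $\L M=I-\bone\pi^{\top}$ from the first-step identity $\L H=J-\Pi^{-1}$, and then close the gap by showing that $\bbG$ also satisfies the left-sided system $(\L X=I-\bone\pi^{\top},\ \pi^{\top}X=0)$ and that this system has a unique solution.

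What your approach buys is self-containment: it covers the digraph case while using only elementary facts (first-step recursion, $\Ret(j)=1/\pi_j$, and the one-dimensionality of $\ker\L$), with no appeal to the stopping-rule machinery. You even recover the random target identity \eqref{eqn:thit} along the way as $H\pi\in\ker\L$. What the paper's route buys is conceptual: by going through $X_{\pi}$ it exhibits $\bbG$ as an exit-frequency matrix plus a rank-one correction, which is the paper's thematic point and feeds the later sections. Your observation that $\bbG$ is simultaneously the left and right pseudo-inverse (i.e.\ $\L\bbG=\bbG\L=I-\bone\pi^{\top}$ together with $\bbG\bone=0$, $\pi^{\top}\bbG=0$) is a clean fact worth isolating; it is implicit in the paper but not stated.
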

We give two proofs of this  result. In Section \ref{sec:psychic-proof}, we give a proof for undirected graphs that uses two well known hitting time identities. This argument has the advantage of being  short and self-contained. However, this proof does not shed much light on \emph{why} this formula is correct.

We remedy this situation in Section \ref{sec:efm-proof}, where we give  a second proof of Theorem \ref{thm:green} that places  the result in a much richer context. This argument  holds for strongly connected directed graphs. Formula \eqref{eq:green-formula}  is a manifestation of the deep connection between  Green's function and  the theory of optimal stopping rules for random walks on $G$, introduced by Lov\'asz and Winkler \cite{lovasz+winkler, lovasz+winkler2}. Given a starting distribution $\sigma$ and a target distribution $\tau$, a \emph{stopping rule} $\Gamma(\sigma,\tau)$ halts a random walk whose initial vertex is drawn from $\sigma$ so that the final state is governed by $\tau$. In particular, we are naturally interested in optimal mixing rules $\Gamma(i,\pi)$ where $\sigma=i$ is a singleton distribution and $\pi$ is the stationary distribution.
In Section \ref{sec:efm-proof}, we review the basics about stopping rules on graphs. We then show that the matrix $\bbG$ is an expression of the vertex-wise characteristics of  the family of optimal mixing rules $\{ \Gamma(i,\pi) \}_{1 \leq i \leq n}$. In particular, we prove that $\bbG$ is a slight alteration of $X_{\pi}$, the \emph{exit frequency matrix} for $\pi$,  introduced in \cite{beveridge+lovasz}.

In Section \ref{sec:examples},  we use equation \eqref{eq:green-formula} to calculate $\bbG$ for some families of undirected graphs.
In Section \ref{sec:spectral}, we  develop  spectral formulas for Green's function and for the following three exact mixing measures. 
Let $H(i,\pi)$ denote the expected length of an optimal stopping rule from $i$ to $\pi$.   The \emph{mixing time} for a graph $G$ is
\[
\Tmix(G) = \max_{i\in V} H(i, \pi).
\] 
In other words,  $\Tmix(G)$ is the expected length of an optimal stopping rule to $\pi$ when we start from the worst possible initial vertex. The \emph{reset time} is the average mixing time:
\[
\Tres(G) = \sum_{i \in V} \pi_i H(i, \pi). 
\]
Finally, the \emph{hit time} is the expected hitting time between two states drawn from the stationary
distribution
\[
\Thit(G)  = \sum_{i,k \in V} \pi_i \pi_k H(i,k) =  \sum_{k \in V}  \pi_k H(\pi,k).
\]
The   \emph{random target identity} \cite{aldous+fill} captures the very useful phenomenon
\begin{equation}
\label{eqn:thit}
 \sum_{k \in V}  \pi_k H(i,k) =  \Thit(G) \quad \mbox{for all } i \in V.
\end{equation}
In our context, this means that the expected length of the naive rule ``draw a target vertex $k$ accord to distribution $\pi$, then perform a random walk until reaching $k$'' is independent of the starting vertex.
 The relationships between $\Tmix$, $\Tres$, $\Thit$ and other mixing measures of a graph are thoroughly explored in \cite{ALW}. Our spectral formulas are most conveniently stated using the eigenvectors for the matrix $\bbL = D^{-1/2} L D^{1/2} = D^{-1} \cL D$. 

\begin{theorem}
\label{thm:green-spectral}
Let $G$ be an undirected graph.
Let $0 = \lambda_0 < \lambda_1 \leq \cdots \leq \lambda_{n-1}$ be the eigenvalues for 
$\bbL = D^{-1/2} L D^{1/2}$ with corresponding orthonormal eigenvectors $\phi_0, \phi_1, \ldots, \phi_{n-1}.$ Then we have the following spectral formulas:
\begin{align}
\label{eqn:green-spectral}
\bbG(i,j) 
&=
\sqrt{\frac{\deg(j)}{\deg(i)}} \,  \sum_{k=1}^{n-1} \frac{1}{\lambda_k} \, {\phi_{ki} \phi_{kj}}, \\
\label{eqn:mix-spectral}
\Tmix & = \max_{i \in V} \, -  \frac{\vol(G)}{ \sqrt{\deg(i)\deg(i')}} \, 
\sum_{k=1}^{n-1} \frac{1}{\lambda_k} 
 \phi_{ki} \phi_{k i'}, \\
\label{eqn:reset-spectral}
\Tres 
&=  \, -   \sum_{i \in V}  \sqrt{\frac{\deg(i)}{\deg(i')}}\, 
\sum_{k=1}^{n-1} \frac{1}{\lambda_k} \,
 \phi_{ki} \phi_{k i'}, \\
\label{eqn:hit-time-spectral}
\Thit &= 
\sum_{k=1}^{n-1} \frac{1}{\lambda_k},
 \end{align}
where $i' \in V$ is a vertex satisfying $H(i',i) = \max_{j \in V} H(j,i).$
\end{theorem}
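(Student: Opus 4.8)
The plan is to diagonalize $\bbL = D^{-1/2} L D^{1/2}$ and feed the resulting spectral decomposition into the hitting-time formula \eqref{eq:green-formula} from Theorem \ref{thm:green}. First I would record the relationship between $\bbL$ and the normalized Laplacian $\cL = D^{1/2}\L D^{-1/2}$: they are similar, $\bbL = D^{-1/2}(D\L)D^{1/2}$ while $\cL = D^{1/2}\L D^{-1/2}$, so they share the eigenvalues $0 = \lambda_0 < \lambda_1 \le \cdots \le \lambda_{n-1}$, and if $\psi_k$ are the orthonormal eigenvectors of $\cL$ then the (right) eigenvectors of $\bbL$ are $\phi_k := D^{-1/2}\psi_k D^{-1/2}\cdots$ — more precisely $\phi_k = c_k D^{-1/2}\psi_k$ up to normalization. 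Since the $\phi_k$ are declared orthonormal in the statement, I would instead work directly: the spectral expansion of the pseudo-inverse $\cL^{+}$ is $\sum_{k\ge 1}\lambda_k^{-1}\psi_k\psi_k^{\top}$, and the identity $\bbG = D^{-1/2}\cL^{+}D^{1/2}$ (which follows from $\bbG\L = I - \bone\pi^{\top}$ together with $D^{-1/2}\cL D^{1/2} = D\L$, i.e. $\L = D^{-1}\cL D$, hence $\bbG = D^{-1/2}\cL^{+}D^{1/2}$ after checking $\bbG\bone = \bzero$) gives $\bbG(i,j) = \deg(j)^{1/2}\deg(i)^{-1/2}\sum_{k\ge1}\lambda_k^{-1}\psi_{ki}\psi_{kj}$. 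Relabeling $\psi_k$ as $\phi_k$ — the orthonormal eigenvectors of $\cL$, which after conjugation are exactly the eigenvectors attached to $\bbL$ as stated — yields \eqref{eqn:green-spectral}.

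Next I would derive \eqref{eqn:hit-time-spectral}. By the random target identity \eqref{eqn:thit}, $H(\pi,j) = \Thit$ for every $j$, so summing \eqref{eq:green-formula} over $j$ and using $\bbG\bone = \bzero$ gives $0 = \sum_j \pi_j(\Thit - H(i,j))$; but a cleaner route is to take the trace: from $\bbG(i,i) = \pi_i(\Thit - H(i,i)) = \pi_i\Thit$ and \eqref{eqn:green-spectral} at $i=j$, we get $\pi_i \Thit = \sum_{k\ge1}\lambda_k^{-1}\phi_{ki}^2$; summing over $i\in V$ and using $\sum_i \pi_i = 1$ together with orthonormality $\sum_i \phi_{ki}^2 = 1$ yields $\Thit = \sum_{k=1}^{n-1}\lambda_k^{-1}$. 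For \eqref{eqn:hit-time-spectral} I must be mildly careful that $\bbG(i,i)=\pi_i\Thit$ really holds — this is immediate from \eqref{eq:green-formula} and $H(i,i)=0$.

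For \eqref{eqn:mix-spectral} and \eqref{eqn:reset-spectral} I would use the characterization of optimal stopping rules: the length of an optimal rule from $i$ to $\pi$ equals $H(i,\pi) = \max_{j\in V}\bigl(H(\pi,j) - H(i,j)\bigr) = \Thit - \min_{j} H(i,j)$ by the random target identity; equivalently, writing $i'$ for a vertex maximizing $H(\cdot\,,i)$, one has $H(i,\pi) = H(i',i) - H(\pi,\cdot)\cdots$ — the precise fact I need is the known identity $H(i,\pi) = \max_j\bigl(H(j,i) - H(\pi,i)\bigr)$ rearranged so that $H(i,\pi) = -\bbG(i',i)/\pi_i \cdot(\cdots)$; concretely, from \eqref{eq:green-formula}, $-\bbG(i',i)/\pi_i = H(i',i) - H(\pi,i)$, and by the theory of optimal mixing rules (Lov\'asz–Winkler) this difference is exactly $H(i,\pi)$ when $i'$ is the "farthest" vertex to $i$. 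Substituting the spectral form \eqref{eqn:green-spectral} of $\bbG(i',i)$ and recalling $\pi_i = \deg(i)/\vol(G)$ gives $H(i,\pi) = -\dfrac{\vol(G)}{\sqrt{\deg(i)\deg(i')}}\sum_{k\ge1}\lambda_k^{-1}\phi_{ki'}\phi_{ki}$, and taking the max over $i$ gives \eqref{eqn:mix-spectral}; multiplying $H(i,\pi)$ by $\pi_i$ and summing gives \eqref{eqn:reset-spectral} after the $\vol(G)$ cancels one power of $\deg(i)$.

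The main obstacle is the clean justification that $H(i,\pi) = -\bbG(i',i)/\pi_i$ with $i'$ as defined — i.e. that the optimal mixing length from $i$ is captured by the single "extremal" entry of the $i$-th column of $\bbG$. This rests on the Lov\'asz–Winkler duality between optimal stopping rules and hitting times (the "exit frequency / halting state" characterization reviewed in Section \ref{sec:efm-proof}), which tells us that a vertex $j$ is a halting state of $\Gamma(i,\pi)$ iff $H(i,j) = \min_{v} H(\cdot\,)\cdots$, equivalently iff $H(j,i)$ is maximal; everything else is bookkeeping with \eqref{eq:green-formula}, orthonormality of the $\phi_k$, and $\pi_i = \deg(i)/\vol(G)$. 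I would therefore state that extremal-hitting-time fact as a cited lemma and keep the rest of the argument as the short substitution above.
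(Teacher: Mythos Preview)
Your overall strategy is sound and, for \eqref{eqn:mix-spectral} and \eqref{eqn:reset-spectral}, essentially coincides with the paper's: both you and the paper invoke the Lov\'asz--Winkler identity $H(i,\pi)=H(i',i)-H(\pi,i)$ (this is equation \eqref{eqn:mixing-time2}) and then read off the spectral form from $\bbG$. For \eqref{eqn:green-spectral}, however, you take a genuinely different route. The paper starts from L\'ovasz's spectral hitting-time formula \eqref{eqn:hit-spectral}, proves the intermediate lemma $H(\pi,j)=\tfrac{\vol(G)}{\deg(j)}\sum_{k\ge 1}\lambda_k^{-1}\phi_{kj}^2$, and substitutes both into the hitting-time expression \eqref{eq:green-formula}. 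You instead verify directly that $D^{-1/2}\cL^{+}D^{1/2}$ satisfies the defining constraints \eqref{eqn:green-cons}, so $\bbG=D^{-1/2}\cL^{+}D^{1/2}$, and then expand $\cL^{+}=\sum_{k\ge 1}\lambda_k^{-1}\phi_k\phi_k^{\top}$. Your argument is shorter and purely linear-algebraic; the paper's has the advantage of producing the auxiliary formula for $H(\pi,j)$ along the way, which feeds into the later applications.

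There is one real error in your treatment of \eqref{eqn:hit-time-spectral}. You assert that the random target identity gives $H(\pi,j)=\Thit$ for every $j$; it does not. The identity \eqref{eqn:thit} says $\sum_k \pi_k H(i,k)$ is independent of the \emph{source} $i$, not that $\sum_k \pi_k H(k,j)$ is independent of the \emph{target} $j$. Hence $\bbG(i,i)=\pi_i H(\pi,i)$, not $\pi_i\Thit$. Your trace argument is still salvageable because $\sum_i \pi_i H(\pi,i)=\sum_{i,k}\pi_i\pi_k H(k,i)=\Thit$ by definition, so summing $\bbG(i,i)=\sum_{k\ge 1}\lambda_k^{-1}\phi_{ki}^2$ over $i$ and using orthonormality still gives \eqref{eqn:hit-time-spectral}; you just need to delete the false intermediate claim. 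Also, tidy up the discussion of $\phi_k$ versus $\psi_k$: since $\cL$ is the symmetric matrix here, its orthonormal eigenvectors are the $\phi_k$ appearing in the statement, and no relabeling or conjugation is needed.
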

The spectral formula \eqref{eqn:hit-time-spectral} for $\Thit$ is well known, but we include it here for comparison.  Ellis \cite{ellis} gives an formula analogous to equation \eqref{eqn:green-spectral} for the normalized Green's function $\cG =  D^{1/2} \bbG D^{-1/2}$ in terms of the eigensystem for the normalized Laplacian $\cL$.

Finally, in Section \ref{sec:green-general}, we  generalize Green's function, based on the exit frequency matrix results in \cite{beveridge+lovasz}. Green's function is intimately related to optimal stopping rules from singleton distributions to the stationary distribution $\pi$. We can replace $\pi$ with any  distribution $\tau$ to define a comparable matrix $\bbG_{\tau}$ for that target distribution. We describe some duality results for these  matrices that suggest some future research directions.

\section{Proof for undirected graphs via cycle reversing}
\label{sec:psychic-proof}

We give a direct proof of Theorem \ref{thm:green} for an undirected graph $G=(V,E)$ that uses  some fundamental identities for  hitting times on undirected graphs. These identities will be helpful when we calculate examples in Section \ref{sec:examples}. 
First, we need the cycle reversing identity \cite{ctw}: for all $i,j,k \in V$, we have
\begin{equation}
\label{eqn:cri}
H(i,j) + H(j,k) + H(k,i) = H(j,i) + H(i,k) + H(k,j).
\end{equation}
We get another identity by multiplying equation \eqref{eqn:cri} by $\pi_k$, summing over $1 \leq k \leq n$ and using the random target identity \eqref{eqn:thit}: 
\begin{equation}
\label{eqn:cri-pi}
H(\pi,i) + H(i,j) = H(\pi,j) + H(j,i).
\end{equation}
Finally, it is well known that the return time to $j$ satisfies
\begin{equation}
\label{eqn:ret}
\Ret(j) = \frac{1}{\pi_j} = \frac{\vol(G)}{\deg(j)}.
\end{equation}
We can now give the first proof of our main result.

\begin{proofof}{Theorem \ref{thm:green} for an undirected graph}
Define the $n \times n$ matrix $B$ where $B_{ij} = \pi_j ( H(\pi,j) - H(i,j))$. We show that $B$ satisfies the constraint equations \eqref{eqn:green-cons}. This will confirm that $\bbG=B$. We check the second condition first. We have
\[
B \bone = \sum_j   \pi_j ( H(\pi,j) - H(i,j)) =  \sum_k \pi_k \sum_j  \pi_j   H(k,j) - \sum_j \pi_j H(i,j) = 0 
\]
by the random target identity \eqref{eqn:thit}. As for the first constraint, we have 
\begin{align*}
(B\L)_{ij} &= 
 (B - B D^{-1} A)_{ij} \\  
&=  B_{ij} - \sum_{k \sim j} \frac{1}{\deg(k)} B_{ik}\\
&=
\pi_j (H(\pi,j) - H(i,j))
- \sum_{k \sim j}  \frac{1}{\deg(k)} \pi_k (H(\pi,k) - H(i,k)) \\
&=
\frac{1}{\vol(G)} \left(
\sum_{k \sim j} (H(\pi,j) - H(\pi,k)) - \sum_{k \sim j} (H(i,j) - H(i,k)) 
\right) \\
&=
\frac{1}{\vol(G)} \left(
\sum_{k \sim j} (H(k,j) - H(j,k)) - \sum_{k \sim j} (H(i,j) - H(i,k)) 
\right) 
\\
&= \frac{1}{\vol(G)} \sum_{k \sim j} (H(k,i) -H(j,i)) \\
&= \pi_j \left( \sum_{k \sim j} \frac{1}{\deg(j)}H(k,i) - H(j,i) \right) \\
&=
\left\{
\begin{array}{cc}
-\pi_j & j \neq i \\
\pi_j (\mbox{Ret}(j) - 1)& j = i \\
\end{array}
\right. \\
&=
\left\{
\begin{array}{cc}
-\pi_j & j \neq i \\
1- \pi_j & j = i \\
\end{array}
\right.
\end{align*}
where the fifth, sixth and ninth equalities follows from equations \eqref{eqn:cri-pi}, \eqref{eqn:cri} and \eqref{eqn:ret}, respectively. 
This entrywise formula is equivalent to our first constraint
$\bbG \L = I - \bone \pi^{\top}.$
\end{proofof}

We close this section by verifying that our formula is consistent with some  facts established in \cite{chung+yau} about Green's function for an undirected graph. First,  the matrix $D^{1/2} \bbG D^{-1/2}$ is symmetric, or equivalently: $\pi_i \bbG(i,j) = \pi_j \bbG(j,i)$ for all $i,j$. Using our formula, indeed we have
\begin{equation}
\label{eqn:green-sym}
\pi_i \bbG(i,j) = \pi_i \pi_j ( H(\pi, j) - H(i,j)) = \pi_i \pi_j ( H(\pi, i) - H(j,i)) = \pi_j \bbG(j,i)
\end{equation}
by equation \eqref{eqn:cri-pi}. Second, we have
$$
H(i,j) = \frac{\vol(G)}{\deg(j)} \bbG(j,j) - \frac{\vol(G)}{\deg(j)}  \bbG(i,j) = \frac{1}{\pi_j} (\bbG(j,j) - \bbG(i,j)).
$$
Note that the equation above corrects a typo in the formulation (Theorem 8) in \cite{chung+yau}, where the  coefficient for $\bbG(i,j)$ is written as $\vol(G)/\deg(i)$ rather than $\vol(G)/\deg(j)$. (This typo originates with a variation of this minor error in their equation (23).) Using our equation for $\bbG(i,j)$, we have
$
\frac{1}{\pi_j} (\bbG(j,j) - \bbG(i,j)) = H(\pi,j) - (H(\pi,j) - H(i,j)) = H(i, j). 
$

\section{Proof for directed graphs via  exit frequencies}
\label{sec:efm-proof}

In this section, we give a second proof of Theorem \ref{thm:green} that reveals the connections between $\bbG$ and the theory of exact stopping rules for random walks on graphs. Our main contribution is in recognizing the deep relationship between these two lines of research. Once we have developed the proper context, our proof of Theorem \ref{thm:green} will be quite short.
We start with an overview of the results in Lov\'asz and Winkler \cite{lovasz+winkler2}. 

Let $G$ be a digraph with transition matrix $P$, so that $\L = I - P$.
Given any starting distribution $\sigma$ and any target distribution $\tau$, there exist one or more stopping rules $\Gamma:\sigma \rightarrow \tau$ that generate a sample from $\tau$ when started from a vertex drawn from $\sigma$. Such a rule $\Gamma$ is \emph{optimal} when it minimizes the expected length of the rule $E(\Gamma)$ among all $(\sigma,\tau)$-rules. The \emph{access time} $H(\sigma, \tau)$ is  the expected length of an optimal $(\sigma,\tau)$-rule,
\[
H(\sigma,\tau) = \min_{\Gamma:\sigma \rightarrow \tau} E(\Gamma).
\] 
For example, when $\sigma=i$ and $\tau=j$ are singleton distributions, the access time equals the hitting time $H(i,j)$. 

The key to understanding an access time is to partition this expected length by the vertices of the graph. Let $\Gamma$ be an optimal $(\sigma,\tau)$-rule. We define the $k$th \emph{exit frequency} $x_k(\sigma,\tau)$ to be the expected number of exits from vertex $k$ when following rule $\Gamma$. The \emph{conservation equation} \cite{pitman} states that for all $j \in V$,
\begin{equation}
\label{eqn:conservation}
\sum_{i \in V} p_{ij} x_i (\sigma, \tau) - x_j (\sigma,\tau) = \tau_j - \sigma_j.
\end{equation}
Intuitively, this says that the expected difference between the number of entrances and exits at $j$ must be $\tau_j - \sigma_j$. All optimal $(\sigma,\tau)$-rules have the same exit frequencies (even though the rules themselves may have very different execution). Moreover, we have the following simple test for optimality:
\[
\Gamma \mbox{ is an optimal stopping rule} \Longleftrightarrow \exists k \in V, x_k(\Gamma)=0.
\]
Such a vertex $k$ with $x_k(\sigma,\tau)=0$ is called a $(\sigma,\tau)$-\emph{halting state}. This simple criterion makes it easy to determine whether a stopping rule is optimal: we must simply check whether there is a vertex that is never exited. For example, when our target is a singleton $\tau=j$, the rule ``walk until you reach $j$'' is an optimal rule (since $j$ is a halting state), so the access time from $\sigma$ to $j$  is $H(\sigma,j) = \sum_{i \in V} \sigma_i H(i, j).$

For our final result from \cite{lovasz+winkler},  we have a formula for optimal exit frequencies in terms of access times:
\begin{equation*}
x_j(\sigma,\tau) = \pi_j (H(\sigma, \tau) + H(\tau, j) - H(\sigma, j)).
\end{equation*}
In other words, the $j$th exit frequency measures the expected penalty for obtaining a sample from $\tau$ along the way during a stopping rule from $\sigma$ to $j$.  Herein, we  focus on stopping rules that  start from a singleton distribution $\sigma=i$, in which case our formula is
\begin{equation}
\label{eqn:exit-freq2}
x_j(i,\tau) = \pi_j (H(i, \tau) + H(\tau, j) - H(i, j)).
\end{equation}

Next, we adopt the matrix viewpoint introduced in \cite{beveridge+lovasz}.
  Fixing the target distribution $\tau$, we consider the family of stopping rules from singletons to $\tau$ as an ensemble. We create an $n \times n$ matrix $X_{\tau}$ whose $i$th row contains the exit frequencies for an optimal $(i, \tau)$-stopping rule. This gives us the \emph{exit frequency matrix} whose $ij$th entry is 
$$
(X_{\tau})_{ij} = x_j (i, \tau).
$$
We can then write the conservation equation \eqref{eqn:conservation} for this ensemble of optimal rules in matrix form:
\begin{equation}
\label{eqn:cons-matrix}
X_{\tau} \L = X_{\tau} (I-P) = I - \bone \tau^{\top}.
\end{equation}
We are now ready for the second proof of our main result.

\begin{proofof}{Theorem \ref{thm:green}}
Consider the exit frequency matrix $X_{\pi}$. By equation \eqref{eqn:cons-matrix}, we have $X_{\pi} \L = I - \bone \pi^{\top}$. This is the first constraint of \eqref{eqn:green-cons} for Green's function. Since $\L$ has rank $n-1$, and $\pi$ is a left eigenvector of $\L$ for eigenvalue 0, we have
\begin{equation}
\label{eqn:green-efm}
\bbG = X_{\pi} - \bh \pi^{\top}
\end{equation}
where $\bh$ is some constant vector.
The second constraint in equation \eqref{eqn:green-cons} requires that the rows of $\bbG$ all sum to zero. Meanwhile, the $i$th row of $X_{\pi}$ sums to $H(i,\pi)$:  adding up all the expected exits gives the expected length of the rule. Therefore
 $h_i = H(i,\pi)$, so that the $ij$th entry of Green's function is
\[
\bbG(i,j) = x_j(i,\pi) - \pi_j H(i,\pi) = \pi_j (H(\pi, j) - H(i,j))
\]
by equation \eqref{eqn:exit-freq2} with target distribution $\tau=\pi$.
\end{proofof}

Equation \eqref{eqn:green-efm} reveals that Green's function is the exit frequency matrix $X_{\pi}$ plus a rank one matrix. Further investigation of this exit frequency matrix can be found in \cite{beveridge+lovasz}.   We conclude this section with some immediate consequences of equation \eqref{eqn:green-efm}. In Section \ref{sec:green-general}, we pursue some duality results for Green's function, analogous to those found in \cite{beveridge+lovasz}.

First, we confirm that typically $\pi_i \bbG(i,j) \neq \pi_j \bbG(j,i)$
 because the cycle reversing identity \eqref{eqn:cri}  does not hold for  digraphs. Next, we 
give an alternative definition for the exit frequency matrix $X_{\pi}$ that parallels the Green's function constraints \eqref{eqn:green-cons}. 
As noted above, every optimal stopping rule contains a halting state. Therefore $X_{\pi}$ is the unique matrix satisfying
\begin{equation}
\label{eqn:efm-cons}
\begin{array}{rcl}
X_{\pi} \L &=& I - \bone \pi^{\top}, \\
\min_{j \in V} (X_{\pi})_{ij}  &=& 0 \quad 1 \leq i \leq n.
\end{array}
\end{equation}
The second constraint for $X_{\pi}$ requires that all entries are nonnegative, and at least one entry in each row is zero. As noted above, this is equivalent to saying that  the entries in the $i$th row are the exit frequencies for an \emph{optimal} stopping rule from $i$ to $\pi$. 
The second constraint \eqref{eqn:green-cons} on $\bbG$ makes sense from a linear algebraic perspective, while the second constraint \eqref{eqn:efm-cons} on $X_{\pi}$  is  fundamental to the stopping rule point of view.

Finally, we make some additional connections between Green's function and the theory of stopping rules. 
First, it is clear that for all $j \in V$, we have
$$H(\pi, j) =  \frac{1}{\pi_j} \bbG(j,j)$$
and that
$\Thit = \mathrm{Tr}(\bbG).$ The latter observation is also a manifestation of the spectral identity $\Thit= \sum_{k=1}^n 1/ \lambda_k$ listed in Theorem \ref{thm:green-spectral}.
Second, 
for every row $i$, we have
$$\displaystyle{\sum_{j \in V} \bbG(i,j) = - H(i, \pi).}$$
Therefore we can obtain the reset time by taking the weighted sum of these row sums:
\[
\Tres =  - \sum_{i \in V}  \sum_{j \in V} \pi_i \bbG(i,j).
\]
Third, we can also recover the mixing time starting from $i$ as follows:
\begin{equation}
\label{eqn:imix-from-green}
H(i, \pi) =  \max_{j} \, - \frac{1}{\pi_j} \bbG(i,j).
\end{equation}
Equivalently, $H(i,\pi)$ is the largest entry in the $i$th row of the matrix product $- \bbG \, \Pi^{-1} $ 
where $\Pi^{-1} = \mbox{diag}(\pi_1^{-1}, \pi_2^{-1}, \ldots, \pi_n^{-1}).$ In other words,
$$
H(i, \pi) = \max_{j \in V} \, - (\bbG \, \Pi^{-1})_{ij}.
$$
This brings us to our final observation: the mixing time $\Tmix$ is 
\begin{equation}
\label{eqn:tmix-from-green}
\Tmix = \max_{i \in V}  \max_{j \in V} \, - (\bbG \, \Pi^{-1} )_{ij}.
\end{equation}
In summary, these quantities are easily accessible, once we have Green's function. This underscores the deep connection between this pseudo-inverse of the Laplace operator and the theory of exact stopping rules.

\section{Examples}
\label{sec:examples}

In this section, we calculate Green's function for some  families of undirected graphs. The formulas for the complete bipartite graph and for general trees are new. The remaining formulas have previously been determined via different methods, as noted below. Our calculations are faster, and the connections to stopping rules provide new insight into many of these values.
Before getting to the examples, we must recount some results from Lov\'asz and Winkler \cite{lovasz+winkler-forget} about optimal mixing rules. That paper considers directed graphs, but we only present the simpler formulations  for the undirected graph case. 

Halting states for an optimal rule $\Gamma(i,\pi)$ enjoy some additional structure. 
 Given a target vertex $i$, another vertex $j$ is called \emph{$i$-pessimal} if it achieves $H(j,i) = \max_{k \in V} H(k,i)$. We use $i'$ to denote an $i$-pessimal vertex. (There may be multiple $i$-pessimal vertices; in this case we take $i'$ to be an arbitrarily chosen one.) For an undirected graph, we have two different formulas for the mixing time:
\begin{equation}
\label{eqn:mixing-time}
H(i, \pi) = H(i, i') - H(\pi, i').
\end{equation}
and
\begin{equation}
\label{eqn:mixing-time2}
H(i, \pi) = H(i', i) - H(\pi, i). 
\end{equation}
Furthermore,  $i'$ is a halting state for an optimal $(i,\pi)$-stopping rule.
A vertex $z$ such that $H(z,\pi) = \Tmix$ is called \emph{mixing pessimal.} 
We have the following useful equivalence for a vertex $z \in V$ on an undirected graph:
\begin{equation}
\label{eqn:tmix-achieved}
 \Tmix = H(z,\pi) \quad \Longleftrightarrow \quad H(z',z) = \max_{i \in V} H(i',i).
\end{equation}
For example, the endpoints of the path $P_n$ are both mixing pessimal. More generally, 
 if $z$ is mixing pessimal, then so is $z'$, meaning that $H(z,\pi) = \Tmix = H(z', \pi)$. Moreover, $z$ is a halting state for an optimal $(z',\pi)$-stopping rule.

{\bf The Complete Graph.} \quad
Green's function for $K_n$ was calculated by Xu and Yau \cite{xu+yau}. We use equation \eqref{eq:green-formula} to find these values.  It is easy to verify that $H(i,j) = n-1$ for all $i \neq j$. Therefore
\begin{align*}
\bbG(i,j) = {\pi_j} (H(\pi,j) - H(i,j)) = \frac{1}{n} \left( 
\frac{n-1}{n} \cdot (n-1) - H(i,j)
 \right)
 =
 \left\{
 \begin{array}{cl}
 -\frac{n-1}{n^2} & i \neq j,\\
 \left(\frac{n-1}{n}\right)^2 & i = j.
 \end{array}
 \right.
\end{align*}

{\bf The Complete Bipartite Graph and the Star.} \quad
Consider the complete bipartite graph $K_{r,s}$ where $|U|=r$ and $|W|=s$ are the partite sets. A simple calculation shows that 
for $u_i, u_j \in U$ and $w_k, w_{\ell} \in W$ where $i \neq j$ and $k \neq \ell$,
\[
\begin{array}{ccc}
H(u_i,w_k) = 2s-1, &\quad&
H(w_k,u_i) = 2r-1, \\
H(u_i,u_j) = 2r,  &&
H(w_k,w_\ell) = 2s. 
\end{array}
\]
As for access times from the stationary distribution, we have
\[
\begin{array}{ccc}
H(\pi,u) = 2r - \frac{3}{2}, &\quad&
H(\pi,w) = 2s- \frac{3}{2}.
\end{array}
\]
By equation \eqref{eq:green-formula}, we have
\[
\begin{array}{ccc}
\begin{array}{rcl}
\bbG(u_i,u_i) &=& 1 - \frac{3}{4r}, \\
\bbG(u_i,u_j) &=&   -\frac{3}{4r}, \\
\bbG(u_i,w_k) &=&  -\frac{1}{4s},  
\end{array}
&\quad&
\begin{array}{rcl}
\bbG(w_k,w_k) &=& 1 - \frac{3}{4s},  \\
\bbG(w_k,w_{\ell}) &=& -\frac{3}{4s}, \\
\bbG(w_k,u_i) &=& -\frac{1}{4r}.  \\
\end{array}
\end{array}
\]
In the special case of the star $K_{1,n-1}$ with center $c$ and leaves $v,w$ are leaves, we have
\[
\begin{array}{ccc}
\begin{array}{ccc}
\bbG(c,c) &=& \frac{1}{4}, \\
\bbG(c,v) &=& - \frac{1}{4(n-1)} ,
\end{array}
&\quad&
\begin{array}{ccc}
\bbG(v,v) &=& 1 - \frac{3}{4 (n-1)}, \\
\bbG(v,c) &=& - \frac{1}{4},  \\
\bbG(v,w) &=& - \frac{3}{4(n-1)}.
\end{array}
\end{array}
\]
The Green's function values for $K_{1,n-1}$ were also calculated in \cite{xu+yau}.

{\bf The Path.} \quad
Green's function for the path $P_n$ was calculated in \cite{bcsl} using a formula for the normalized Green's function for $P_n$ from \cite{chung+yau}. We derive the same formula, and provide additional insight into its component terms. 
For $1 \leq i \leq j \leq n$, we have
\begin{align}
\nonumber
 \bbG(i,j)  &=  \pi_j \big( H(\pi,j) - H(i,j)  \big) \\
 \nonumber
&= \pi_j \big( (H(\pi,j) - H(i,j)) + ( H(1,n) - H(\pi,n)) - (H(1,n) - H(\pi,n)) \big) \\
\nonumber
&= \pi_j \big((H(\pi,j) - H(\pi,n)) + (H(1,i) + H(j,n)) - \Tmix(P_n) \big)\\
\nonumber
&= \pi_j \big((H(n,j) - H(j,n))+ (H(1,i) + H(j,n)) - \Tmix(P_n) \big) \\
\label{eqn:green-path}
&= \pi_j \big(H(1,i) + H(n,j) - \Tmix(P_n) \big),
\end{align}
where the third equality follows from equation \eqref{eqn:mixing-time} and the fourth follows from equation \eqref{eqn:cri-pi}.
The value of $\Tmix(P_n)$ for the path was calculated in \cite{beveridge+wang}, and using this value we obtain
$$ \bbG(i,j)= \pi_j \left( (i-1)^2 + (n-j)^2 - \frac{2n^2-4n+3}{6} \right)
$$
which matches the formula in \cite{bcsl}. Note that equation \eqref{eqn:green-path} tells a compelling story about what the value of $\bbG(i,j)$ captures about the graph. It sums  the hitting times from the ends of the path to $i$ and $j$, respectively, and then subtracts the mixing time.

{\bf Trees.} \quad
The path formula \eqref{eqn:green-path} for Green's function can be generalized to an arbitrary tree $G$. 
Let $z$ and $z'$ be vertices that achieve $H(z',z) = \max_i H(i',i).$  By equation \eqref{eqn:tmix-achieved}, we have  $H(z,\pi) = \Tmix = H(z', \pi)$. Let $W = \{z = w_1, \ldots, w_k = z' \}$ be the unique $(z,z')$-path in $G$. We now calculate $\bbG(i,j)$. Let $i^*=w_r$ (resp.~$j^*=w_s$) be the vertex in $W$ that is closest to $i$ (resp.~$j$). Without loss of generality, assume that $r \leq s$. Figure \ref{fig:green-tree} shows an example. 
We  manipulate $\bbG(i,j)$ as we did in the path example above to obtain
\begin{align*}
\frac{1}{\pi_j} \bbG(i,j) 
&= \pi_j \big( H(\pi,j) - H(i,j) \big)\\
&= \pi_j \big( (H(\pi,j) - H(i,j)) + ( H(z,z') - H(\pi,z')) - (H(z,z') - H(\pi,z')) \big) \\
&= \pi_j \big( (H(\pi,j) - H(\pi,z')) + (H(z,z') - H(i,j)) - \Tmix(G)  \big) \\
&= \pi_j \big( (H(z',j) - H(j,z'))+ (H(z,z') - H(i,j)) - \Tmix(G) \big) \\
&= \pi_j  \bigg( \big( H(z',j^*) - H(j,j^*) \big) +  \big( H(z,i^*) - H(i, i^*) \big) - \Tmix(G) \bigg)
\end{align*}
where the last equality follows from expanding hitting times such as $H(z',j) = H(z', j^*) + H(j^*,j)$. In the case of the path, $i^*=i$ and $j^*=j$ while $z=1$ and $z'=n$, which gives us the path formula above. Further investigation of exact stopping rules on trees can be found in \cite{beveridge, beveridge+wang, beveridge+youngblood}.

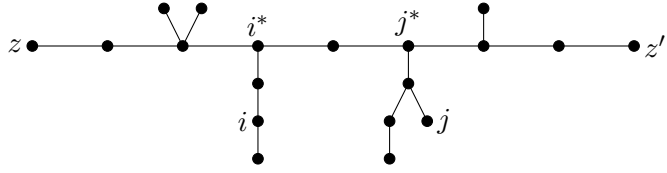
\begin{figure}[t]

\begin{center}
\begin{tikzpicture}

\draw (0,0) -- (8,0);

\foreach \i in {0,1,2,3,4,5,6,7,8}
{
\draw[fill] (\i, 0) circle (2pt);
}

\draw (1.75, .5) -- (2,0) -- (2.25, .5);

\draw[fill] (1.75, .5) circle (2pt);
\draw[fill] (2.25, .5) circle (2pt);

\draw (3,0) -- (3,-1.5);

\draw[fill] (3, -.5) circle (2pt);
\draw[fill] (3, -1) circle (2pt);
\draw[fill] (3, -1.5) circle (2pt);

\node[left] at (0,0) {$z$};
\node[right] at (8,0) {$z'$};

\node[left] at (3,-1) {$i$};
\node[above] at (3,0) {$i^*$};

\node[right] at (5.25,-1) {$j$};
\node[above] at (5,0) {$j^*$};

\draw (4.75, -1.5) -- (4.75, -1) -- (5, -.5) -- (5.25, -1);
\draw (5,0) -- (5, -.5);

\draw[fill] (5, -.5) circle (2pt);
\draw[fill] (5.25, -1) circle (2pt);
\draw[fill] (4.75, -1) circle (2pt);
\draw[fill] (4.75, -1.5) circle (2pt);

\draw (6,0) -- (6,.5);
\draw[fill] (6,.5) circle (2pt);

\end{tikzpicture}
\end{center}

\caption{A tree $G$ with mixing pessimal vertices $z$ and $z'$ where $H(z',z) = \max_{v \in V} H(v,z)$. The vertices $i$ and $j$ are such that the $(i,z)$-path intersects the $(z,z')$ path no further from $z$ than the $(j,z)$-path does.}

\label{fig:green-tree}

\end{figure}

{\bf The Cycle.} \quad Ellis \cite{ellis} contains a formula for Green's function on the cycle. We give a new derivation. 
We take our vertices to be $\{0,1,\ldots, n-1 \}$ and calculate the values for vertex $0$. It is well known (and easy to check) that $H(0,j) = H(j,0) = j(n-j).$
Therefore
\[
H(\pi,0) = \sum_{j} \frac{j(n-j)}{n} =  \frac{(n+1)(n-1)}{6}.
\]
We then find that
\[
\bbG(0,j)= \frac{1}{n} \left(  \frac{(n+1)(n-1)}{6} - j(n-j) \right).
\]

{\bf The Hypercube.} \quad
Let $Q_d$ be the $d$-dimensional hypercube, where the vertices are labeled by binary $d$-tuples $\bv = (v_1, v_2, \ldots , v_d )$. Green's function for the hypercube was calculated in \cite{chung+yau}. We provide a simpler formula, and make some further observations.

We start by finding hitting time formulas to vertex $\bzero$, originally calculated by Pomerance and Winkler \cite{winkler}. 
For $0 \leq \ell \leq d$, let $V_{k}$  denote the $k$th level of $Q_d$, consisting of all vertices labelled with $k$ ones. 
 Let $T_k$ denote the expected time for a random walk started at $v \in V_k$  to reach $V_{k-1}$. Clearly, $T_0=0$ and the remaining level-wise hitting times satisfy the recurrence
$$
T_k = 1 + \frac{d-k}{d} ( T_{k+1} + T_k ), \quad 1 \leq k \leq d.
$$
It is easy to check that for $0 < k \leq d$, we have
$$
T_k = \frac{\sum_{j=k}^n {d \choose j}}{{d-1 \choose k-1}}.
$$
Therefore the hitting time from a vertex $\bv \in V_{\ell}$ to $\bzero$ is 
$$
H(\bv, \bzero) = \sum_{k=1}^{\ell} T_k = \sum_{k=1}^{\ell}  \frac{1}{{d-1 \choose k-1}} \sum_{j=k}^d {d \choose j}
= d \sum_{k=1}^{\ell} \frac{1}{k} \sum_{j=k}^d \frac{{d \choose j}}{{d \choose k}}.
$$

The hypercube has a transitive automorphism group, so $H(\pi, i) = \sum_{j \in V} \pi_j H(j,i) = 
\sum_{j \in V} \pi_j H(i,j) = \Thit$. Equation (5.68) in Aldous and Fill \cite{aldous+fill} states that
\begin{equation}
\label{eqn:cube-hit}
\Thit = \frac{d}{2} \sum_{k=1}^d \frac{1}{k} {d \choose k}.
\end{equation}
Therefore, Green's function for the hypercube is given by
\[
\bbG(\bzero, \bv) = \frac{1}{2^d} \left(
\frac{d}{2} \sum_{k=1}^d \frac{1}{k} {d \choose k}
- d \sum_{k=1}^{\ell} \frac{1}{k} \sum_{j=k}^d \frac{{d \choose j}}{{d \choose k}}
\right)
\]
where $\bv \in V_{\ell}$.

We conclude this section by calculating $\Tmix(Q_d)$ using formula \eqref{eqn:mixing-time2}. 
Equation (5.69) in Aldous and Fill \cite{aldous+fill} gives an alternate formula for the pessimal hitting time:
$$
H(\bone, \bzero) = 2^{d-1} \sum_{k=0}^{d-1} \frac{1}{{d-1 \choose k}}.
$$
We use induction to show that $H(\bone, \bzero) = \frac{d}{2} \sum_{k=1}^d \frac{2^k}{k}$. This holds for $d=1$, and we have
\begin{align*}
2^{d} \sum_{k=0}^{d} \frac{1}{{d \choose k}} 
&=
2^{d} \cdot \frac{1}{2} \left( 2 + \sum_{k=0}^{d-1}  \frac{1}{{d \choose k}} + \frac{1}{{d \choose k+1}}\right) 
\, = \,
2^d + 2^{d-1} \left(
\frac{d+1}{d}\sum_{k=0}^{d-1}  \frac{1}{{d-1 \choose k}}
\right) \\
&=  
2^d + \frac{d+1}{d} \cdot \frac{d}{2} \sum_{k=1}^d \frac{2^k}{k}
\, = \,
\frac{d+1}{2} \sum_{k=1}^{d+1} \frac{2^k}{k}.
\end{align*}
Next, a straight-forward inductive argument  shows that
$$
H(\bone, \bzero) = 
\frac{d}{2} \sum_{k=1}^d \frac{2^k}{k} =
\frac{d}{2} \sum_{k=1}^d \frac{1}{k} \left( 1+ {d \choose k} \right)
$$
Combining this final expression with equation
\eqref{eqn:cube-hit}, we find that
$$
\Tmix(Q_d) = -2^d \, \bbG(\bone, \bzero) = H(\bone, \bzero) - \Thit  
=  \frac{d}{2} \sum_{k=1}^d \frac{1}{k} \approx \frac{1}{2} d \log d.
$$
This result squares with the more traditional definitions of the (approximate) mixing time (cf. \cite{aldous+fill}), which are known to be $O(d \log d)$. It is worth remarking that  both $\Thit$ and $H(\bone, \bzero)$ are $O(2^d + 1/d + O(1/d^2))$, while their difference is essentially $ \frac{1}{2} d \log d$. More generally, it is interesting to keep in mind that when a graph has a transitive automorphism group,  the difference between $H(i',i)$ and $\Thit$ is, in fact, the mixing time $\Tmix$.

\section{Spectral formulas}
\label{sec:spectral}

In this section, we explore Green's function, hitting times, and exact mixing measures from the spectral point of view. We prove Theorem \ref{thm:green-spectral} and apply these results to the family of toric grids $C_{n_1} \times C_{n_2} \times \cdots \times C_{n_d}$. 

 Let $\bbL = D^{-1/2} L D^{1/2} = D^{-1} \cL D$. The matrices $\bbL, L, \cL$ all share the same eigenvalues  $0 = \lambda_0 < \lambda_1 \leq \cdots \leq \lambda_{n-1}$, though their eigenbases are different. 
Let $\phi_0, \phi_1, \ldots, \phi_{n-1}$ be the corresponding orthonormal eigenbasis for the symmetric matrix $\bbL$.
We have $\bbL = I - N$ where $N=D^{1/2}A D^{1/2} = D^{-1/2} P D^{1/2}$, so we can reformulate Theorem 3.1 in \cite{lovasz} as
\begin{equation}
\label{eqn:hit-spectral}
H(i,j) =  {\vol(G)} \sum_{k=1}^{n-1} \frac{1}{\lambda_k} 
\left(
\frac{\phi_{kj}^2}{\deg(j)} - \frac{ \phi_{ki} \phi_{kj}}{\sqrt{\deg(j)\deg(i)}} 
\right).
\end{equation}
We use this spectral hitting time formula to find a corresponding formula for the access time from $\pi$ to a singleton distribution, and then prove Theorem \ref{thm:green-spectral}.

\begin{lemma} For  $j \in V$, we have
\begin{equation}
\label{eqn:from-pi-spectral}
H(\pi, j) =   \frac{\vol(G)}{ \deg(j)} \sum_{k=1}^{n-1} \frac{\phi_{kj}^2 }{ \lambda_k}.
\end{equation}
\end{lemma}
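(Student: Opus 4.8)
The plan is to compute $H(\pi,j) = \sum_{i\in V}\pi_i H(i,j)$ directly by summing the spectral hitting-time formula \eqref{eqn:hit-spectral} against the stationary distribution $\pi_i = \deg(i)/\vol(G)$. Writing
\[
H(\pi,j) = \sum_{i\in V} \frac{\deg(i)}{\vol(G)} \cdot \vol(G)\sum_{k=1}^{n-1}\frac{1}{\lambda_k}\left(\frac{\phi_{kj}^2}{\deg(j)} - \frac{\phi_{ki}\phi_{kj}}{\sqrt{\deg(j)\deg(i)}}\right),
\]
the $\vol(G)$ factors cancel, and after interchanging the order of summation the $k$th term splits into two pieces: one proportional to $\frac{\phi_{kj}^2}{\deg(j)}\sum_i \deg(i)$ and one proportional to $\frac{\phi_{kj}}{\sqrt{\deg(j)}}\sum_i \sqrt{\deg(i)}\,\phi_{ki}$.

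The first piece gives $\sum_i \deg(i) = \vol(G)$, which combines with the factor $\frac{1}{\vol(G)}$ already present to leave exactly $\frac{\vol(G)}{\deg(j)}\sum_k \frac{\phi_{kj}^2}{\lambda_k}$ — the claimed right-hand side. So everything comes down to showing the second piece vanishes, i.e.\ that $\sum_{i\in V}\sqrt{\deg(i)}\,\phi_{ki} = 0$ for each $k\ge 1$. This is the key step, and it follows from orthogonality: the vector $v_0$ with entries $\sqrt{\deg(i)}$ (up to normalization by $\sqrt{\vol(G)}$) is precisely the eigenvector $\phi_0$ of $\bbL = D^{-1/2}LD^{1/2}$ for eigenvalue $\lambda_0 = 0$. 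Indeed $\bbL\,(D^{1/2}\bone) = D^{-1/2}L\bone = \bzero$ since $L\bone = (D-A)\bone = \bzero$, and $D^{1/2}\bone$ has entries $\sqrt{\deg(i)}$. Hence $\langle \phi_k, \phi_0\rangle = 0$ for $k\ge 1$ in the orthonormal eigenbasis, which is exactly $\sum_i \sqrt{\deg(i)}\,\phi_{ki}=0$ up to the normalizing constant.

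The main (and only real) obstacle is getting this orthogonality bookkeeping exactly right: one must confirm that $\phi_0$ is indeed the normalized degree-square-root vector and that the normalization constants cancel cleanly, so that the cross term is genuinely zero rather than merely small. Once that is in hand, the computation is a one-line interchange of sums plus the identity $\sum_i\deg(i)=\vol(G)$. I would also remark that formula \eqref{eqn:from-pi-spectral} is consistent with the known identity $H(\pi,j) = \frac{1}{\pi_j}\bbG(j,j)$ together with the diagonal case of \eqref{eqn:green-spectral}, which serves as a useful sanity check but is not needed for the proof.
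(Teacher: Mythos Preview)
Your proof is correct and follows essentially the same approach as the paper: substitute the spectral hitting-time formula \eqref{eqn:hit-spectral} into $\sum_i \pi_i H(i,j)$, interchange sums, and observe that the cross term vanishes because the vector with entries proportional to $\sqrt{\deg(i)}$ is the $\lambda_0=0$ eigenvector of $\bbL$ and hence orthogonal to each $\phi_k$ for $k\ge 1$. The paper's argument is identical in structure, differing only in that it phrases the orthogonality using the scaled vector $v_i=\sqrt{\deg(i)/\deg(j)}$ rather than explicitly identifying $\phi_0$.
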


\begin{proof}
Using equation \eqref{eqn:hit-spectral}, we obtain
\begin{eqnarray*}
\sum_{i \in V} \pi_i H(i,j) 
&=&
\sum_{i \in V} \sum_{k=1}^{n-1} \frac{1}{\lambda_k} 
\left(
\frac{\deg(i) }{\deg(j)} \phi_{kj}^2 - \sqrt{\frac{\deg(i)}{ \deg(j)}} \phi_{ki} \phi_{kj}
\right) \\
&=&
  \frac{ \vol(G)}{\deg(t)}  \sum_{k=1}^{n-1} \frac{\phi_{kj}^2 }{\lambda_k} 
-  \sum_{k=1}^{n-1} \frac{1}{\lambda_k}
\sum_{i \in V}
\left(
\sqrt{\frac{\deg(i)}{ \deg(j)}} \phi_{ki} \right)  \phi_{kj} \\
&=&
  \frac{\vol(G)}{ \deg(j)} \sum_{k=1}^{n-1} \frac{\phi_{kj}^2 }{ \lambda_k}.
\end{eqnarray*}
The final equality holds because the vector $\bv$ with components $v_{i} = \sqrt{\deg(i)/ \deg(j)}$ is an eigenvector of $\bbL$ for eigenvalue $\lambda_0=0$, and therefore $\bv$ is orthogonal to $\phi_k$ for $1 \leq k \leq n-1$. 
\end{proof}

\begin{proofof}{Theorem \ref{thm:green-spectral}}
First we consider Green's function. Equation \eqref{eqn:green-spectral}
arises by substituting equations \eqref{eqn:hit-spectral} and \eqref{eqn:from-pi-spectral} into formula \eqref{eq:green-formula}.
Next, we give a spectral formula for $H(i,\pi)$.
By equation \eqref{eqn:imix-from-green}, we have 
$H(i, \pi) = \max_{j \in V}  \, - \frac{1}{\pi_j} \bbG(i,j) = \max_{j \in V} (H(i, j) -  H(\pi,j)).$
By equation \eqref{eqn:mixing-time2}, this maximum is achieved precisely when $j$ is an $i$-pessimal vertex. In other words,
\[
H(i,\pi)  = - \frac{\vol(G)}{ \sqrt{\deg(i)\deg(i')}} \,
\sum_{k=1}^{n-1} \frac{1}{\lambda_k} 
 \phi_{ki} \phi_{k i'}
\]
 The formulas \eqref{eqn:mix-spectral} and \eqref{eqn:reset-spectral} for $\Tmix$ and $\Tres$  follow directly.
\end{proofof}

As an application, we evaluate these spectral formulas for products of cycles. Ellis \cite{ellis} gives an alternate, recursive formula for Green's function on this graph family that uses Chebyshev polynomials. As a warm-up, we start with the cycle $C_n$, whose calculation also appears in \cite{ellis}.
 Let $\epsilon = e^{2 \pi i /n}$. The eigenvalues of  $\bbL$  are $\lambda_0, \lambda_1, \ldots, \lambda_{n-1}$ where
$\lambda_k =  1 - \frac{1}{2} (\epsilon^k + \epsilon^{-k})= 1 - \cos (2 \pi k /n)$
with corresponding eigenvector $\phi_k$ whose $i$th component is
$
\phi_k (i) =  \epsilon^{ki} / {\sqrt{n}}.
$
Therefore, Green's function for $C_n$ is given by
\[
\bbG(0,j) = \frac{1}{n} \sum_{k=1}^{n-1} \frac{\epsilon^{kj}}{1 - \cos(2 \pi k / n)} =  
\frac{1}{n} \sum_{k=1}^{n-1} \frac{\cos(2 \pi k j / n)}{1 - \cos(2 \pi k / n)}.
\]
Of course, we also have $\bbG(0,j) = \frac{1}{n} \left( (n+1)(n-1)/6 - j(n-j) \right)$, as discussed in Section \ref{sec:examples}.

We now consider the product of cycles $C_{n_1} \times C_{n_2} \times \ldots \times C_{n_d}$, using $\bx = (x_1, x_2, \ldots, x_d)$ to denote a vertex. We will label the eigensystem using an analogous toric notation $\lambda_{\br} = \lambda_{r_1 r_2 \cdots r_d}.$
The eigenvalues for this Cartesian product can be calculated using Exercise 11.7 in \cite{lovasz-problems}.  Let
$\epsilon_j = e^{2 \pi i / n_j}$ for $1 \leq j \leq d$, and let 
$\br = (r_1, r_2, \ldots, r_d)$ where $0 \leq r_i < n_i$ and $1 \leq i \leq d$. The eigenvalues are
\[
\lambda_{\br} = 
1 - \frac{1}{2d} \sum_{t=1}^{d} (\epsilon_t^{r_t} + \epsilon_t^{-r_t} ) = 1- \frac{1}{d} \sum_{t=1}^{d} \cos \bigg( \frac{2 \pi r_t}{n_t} \bigg).
\]
A corresponding orthonormal eigenbasis for $\bbL$ consists of the vectors $\phi_{r_1 r_2 \cdots r_d}$ given by
\[
\phi_{r_1 r_2 \cdots r_d} (j_1, j_2, \ldots, j_d) = \frac{1}{\sqrt{n}} \epsilon_1^{r_1 j_1} \epsilon_2^{r_2 j_2} \cdots \epsilon_d^{r_d j_d}
\]
where $n = n_1 n_2 \cdots n_d.$
By the symmetry of this toric grid, it is sufficient to calculate Green's function when the first vertex is $\bzero = (0, 0, \ldots, 0)$. We have
$$
\bbG(\bzero, \bj) 
=
\frac{1}{n}
\sum_{\br \neq \bzero} \frac{\epsilon_1^{r_1 j_1} \epsilon_2^{r_2 j_2}  \cdots \epsilon_k^{r_k j_d} }
{1-  d^{-1} \, \sum_{t=1}^{d} \cos ( 2 \pi r_t /n_t )} 
=
\frac{1}{n}
\sum_{\br \neq \bzero} \frac{\cos \left(2 \pi  \sum_{t=1}^d r_t j_t / n_t \right) }
{1- d^{-1} \sum_{t=1}^{d} \cos ( 2 \pi r_t /n_t )}.
$$
Symmetry also tells us that  $H(\pi, 0) = \sum_{j \in V} \pi_j H(j,0) = \sum_{j \in V} \pi_j H(0,j) = \Thit$. So we have
\[
\Thit = n \cdot \frac{1}{n} \sum_{\br \neq \bzero} \frac{1}{\lambda_{\br}} =  \sum_{\br \neq \bzero} \frac{1}{1-  d^{-1} \, \sum_{t=1}^{d} \cos ( 2 \pi r_t /n_t )} 
\]
Since this toric grid has a transitive automorphism group, we have
$\Tmix = \Tres$. The vertex $(\ceil{n_1}, \ceil{n_2}, \ldots, \ceil{n_k} )$ is a $\bzero$-pessimal vertex, so that
\[
\Tres = \Tmix = -
\sum_{\br \neq \bzero} \frac{\cos \left(  \pi  \sum_{t=1}^d r_t + 2 \pi \sum_{t=1}^d \frac{r_t (n_t \, \mathrm{mod} \, 2)}{n_t} \right) }
{1- d^{-1} \sum_{t=1}^{d} \cos ( 2 \pi r_t /n_t )}.
\]
In particular, this formula holds for the hypercube $Q_d$ (where each component is actually the path $P_2$). Simplifying yields the  hypercube formulas as found in Section \ref{sec:examples}.
We  leave these calculations to the interested reader.

\section{A Generalized Green's Function}
\label{sec:green-general}

Exit frequency matrices $X_{\tau}$ (for general target distributions $\tau$) provide great insight into the theory of exact stopping rules. For example, exit frequency matrices lead to  natural proofs of two  non-trivial time reversal identities for exact mixing measures $\Tmix = \rTmix$ and $\Tres = \rTfor$, where the \emph{reverse forget time} $\rTfor$ is defined below \cite{beveridge+lovasz}. In this final section, we define a generalized Green's function $\bbG_{\tau}$ for a given target distribution $\tau$, and we explore some duality results corresponding to time reversal of random walks. 

In Section \ref{sec:efm-proof}, we showed that Green's function is the exit frequency matrix $X_{\pi}$ altered so that the row sums are zero. In other words, we can think of $\bbG$ as a signed exit frequency matrix (where ``negative exit frequencies'' are allowed). From this vantage point, we  define the generalized Green's function  $\bbG_{\tau}$   be the $n \times n$ matrix with entries
\[
\bbG_{\tau}(i,j) = x_j (i, \tau) - \pi_j H(i, \tau) = \pi_j ( H(\tau, j) - H(i,j)) = \pi_j \bigg( \sum_{k} \tau_k H(k,j) - H(i,j) \bigg). 
\]
More compactly, $\bbG_{\tau} = X_\tau - \mathbf{h} \pi^{\top}$ where $h_i = H(i,\tau)$.
Generalizing the discussion in Section \ref{sec:efm-proof}, this matrix satisfies 
\begin{align*}
\bbG_{\tau} \L &= I - \bone^{\top} \tau, \\
\bbG_{\tau} \bone &= \bzero.
\end{align*}
For example, when the target is the singleton distribution $\tau=k$, we have
$$\bbG_k(i,j) = \pi_j (H(k,j) - H(i,j)).$$
The $k$th row of $\bbG_k$ is all-zero, while the $k$th column satisfies $G_k(i,k) = - \pi_i H(i,k)$.

Naturally, we are interested in finding Green's function other useful target distributions.
Here, we discuss a pair of target distributions that are important for stopping rules: the \emph{reverse forget distribution} and the \emph{$\pi$-core distribution}. These distributions are related to the stationary distribution, and they enjoy a duality relationship, as described below.

For full generality, we consider the weighted digraph case, and we will talk about ``the Markov chain $P$'' rather than  ``the random walk on weighted digraph $G$ with transition matrix $P$.''    The Markov chain $P$ has a corresponding dual chain 
$$\rP = \Pi^{-1} P^{\top} \Pi.$$  
Time reversal converts random walks governed by $P$ into random walks governed by $\rP$. We continue to employ hatted notation to indicate quantities associated with the reverse chain $\rP$. For example, $\rH(i,j)$ is the expected length of a random walk from $i$ to $j$ on the reverse chain, and $\rX_{\pi}$ is the exit frequency matrix for optimal mixing rules on the reverse chain.

Next, we introduce another  mixing measure.  The \emph{forget time} is defined as 
$$\Tfor = \min_{\tau} \max_{i \in V} H(i, \tau).$$ 
Lov\'asz and Winkler \cite{lovasz+winkler-forget} proved the remarkable equality $\Tres = \rTfor$ and that the reverse forget time is achieved uniquely by the distribution $\rmu$ where
\[
\rmu_i = \pi_i \bigg( 1 + \sum_{j \in V} p_{ij} H(j, \pi) - H(i, \pi)
\bigg).
\]
The duality between $\pi$ (on the forward chain) and $\rmu$ (on the reverse chain) was placed in a broader framework in \cite{beveridge+lovasz} via exit frequency matrices. The connection  is summarized by the matrix equation
\[
\rX_{\rmu} = \Pi^{-1} \big(X_{\pi}- \bone \mathbf{b}^{\top} \big)^{\top} \Pi \quad \mbox{where} \quad
b_i = \min_{j \in V} x_j(i,\pi). 
\]
The reverse forget distribution $\rmu$ is also called the \emph{$\pi$-contrast distribution} $\pi^*$. The $\pi$-core $\pi^{**}$ is the distribution whose (forward) exit frequency matrix
is 
$$X_{\pi^{**}} = X_{\pi}- \bone \mathbf{b}^{\top}.$$ 
The $\pi$-core $\pi^{**}$ is fully dual to $\rmu=\pi^*$: their exit frequency matrices satisfy
\begin{equation}
\label{eqn:efm-duality}
\rX_{\mu} =  \Pi^{-1} X_{\pi^{**}}^{\top}  \Pi. 
\end{equation}
Furthermore, the formula for $\pi^{**}$ mirrors that of the forget distribution $\rmu$:
\[
\pi^{**}_i = \pi_i \bigg( 1 + \sum_{j \in V} \rp_{ij} \rH(j, \rmu) - \rH(i, \rmu)
\bigg).
\]

We now calculate Green's function for  $\rmu= \pi^*$ and  $\pi^{**}$ for the reverse chain and forward chain, respectively.
The Green's functions for these distributions exhibit some nice duality properties. First, we consider $\rmu$ on the reverse chain:
\[
\rbbG_{\rmu}(i,j) = \rx_j(i,\rmu) - \pi_j \rH(i,\rmu) = \pi_j ( \rH(\rmu, j) - \rH(i,j)).
\]
We can use equation \eqref{eqn:efm-duality} to get an alternative formula:
\begin{align}
\nonumber
\rbbG_{\rmu}(i,j)   
& = \rx_j(i,\rmu) - \pi_j  \sum_{k \in V} \rx_k (i, \rmu) 
\, =  \, \frac{\pi_j}{\pi_i} x_i(j,\pi^{**}) - \pi_j \sum_{k \in V} \frac{\pi_k}{\pi_i} x_i (k, \pi^{**}) \\
\nonumber
&=
\pi_j \bigg(
H(j, \pi^{**}) + H(\pi^{**}, i) - H(j,i) - \sum_{k \in V} \pi_k ( H(k, \pi^{**}) + H(\pi^{**}, i) - H(k,i))
\bigg) \\
\nonumber
&=
\pi_j \bigg(
H(\pi, i) - H(j,i)  +  \bigg(  H(j, \pi^{**})  -\sum_{k \in V} \pi_k  H(k, \pi^{**}) \bigg)
\bigg). \\
\label{eqn:green-forget}
&=
\frac{\pi_j}{\pi_i} \bbG(j,i)   + \pi_j  \bigg( H(j, \pi^{**}) - \sum_{k \in V} \pi_k  H(k, \pi^{**}) \bigg). 
\end{align}
In fact, the $\pi$-core has the nice property that $H(i,\pi) = H(i, \pi^{**}) + H(\pi^{**},\pi)$ for all $i \in V$ (see \cite{beveridge+lovasz}), so we also have
\begin{align*}
\rbbG_{\rmu}(i,j) 
= \frac{\pi_j}{\pi_i} \bbG(j,i) + \pi_j \bigg(  H(j, \pi)  - \Tres \bigg). 
\end{align*}
This equality is reminiscent of the symmetry of equation \eqref{eqn:green-sym} in the undirected case.

A similar argument for $\pi^{**}$ on the forward chain $P$ 
gives
\begin{align}
\nonumber
\bbG_{\pi^{**}}(i,j) 
&= \pi_j ( H(\pi^{**}, j) - H(i,j)) \\
&=
\nonumber
\pi_j \bigg(
\rH(\pi, i) - \rH(j,i)  +  \bigg(  \rH(j, \rmu)  -\sum_{k \in V} \pi_k  \rH(k, \rmu) \bigg)
\bigg)
\label{eqn:green-core} \\
&=
\frac{\pi_j}{\pi_i} \rbbG (j,i) +
\pi_j  \bigg(  \rH(j, \rmu)  -\sum_{k \in V} \pi_k  \rH(k, \rmu) \bigg). 
\end{align}
Equations \eqref{eqn:green-forget} and \eqref{eqn:green-core} relate these matrices to the standard Green's functions on the forward and reverse chains. We make two observations. First,  the quantities on the right hand side concern the dual chain, and the roles of $i$ and $j$ reversed. Second, the right hand side contains a correction term that compares the length of rules to the dual distribution. These types of duality relationships are characteristic for stopping rules on the forward and reverse chains. We believe that this point of view will be useful in exploring further properties of Green's function.

 \section{Acknowledgments}
 
This work was completed while the author was a long-term visitor at the Institute for Mathematics and its Applications, during its thematic year on Discrete Structures and Applications. He is grateful to the IMA, and the organizers and participants of this program. Thanks  to Jeannette Jansson for many helpful conversations, and to Peter Winkler for comments on  this manuscript. 
 

\bibliography{green-efm1}

\end{document}